\newtheorem{theorem}{Theorem}[section]
\newtheorem{lemma}[theorem]{Lemma}
\theoremstyle{definition}
\newtheorem{definition}[theorem]{Definition}
\newtheorem{pro}[theorem]{Proposition}
\theoremstyle{remark}
\newtheorem{remark}[theorem]{Remark}
\numberwithin{equation}{section}
\begin{document}
\title[ Energy bound for
 Kapustin-Witten solutions on $S^3\times\mathbb{R}^+$]{ Energy bound for\\
 Kapustin-Witten solutions on $S^3\times\mathbb{R}^+$}
\author{Naichung Conan Leung$\mbox{ }^*$, Ryosuke Takahashi$\mbox{ }^\dagger$}
\address{$\mbox{ }^*$ Institute of Mathematical Scences, Chinese University of Hong Kong,
Academic Building No.1, CUHK, Sha Tin, New Territories, HK}
\email{ leung@math.cuhk.edu.hk }
\address{$\mbox{ }^\dagger$ Department of mathematics, National Cheng Kung University, No.1, University Road, East District, Tainan, Taiwan }
\email{ tryotriple@gmail.com}

\maketitle
\begin{abstract} We show that for solutions of Kapustin-Witten equation with Nahm pole boundary condition on $S^3\times \mathbb{R}^+$, there exists a universal constant bound on its Yang-Mills energy $\|F_A\|_{L^2}$. 
\end{abstract}
\section{Introduction}

Let $M=S^3\times \mathbb{R}^+$ be the Riemannian manifold with the standard metric, $P$ be a $SU(2)$-principal bundle. For any $(A,\phi)\in Conn(P)\times \Omega^1(M;\mathfrak{su}(2))$, the Kapustin-Witten equation [6] is written as follows
\begin{align}
F_A-\phi \wedge \phi&=*d_A \phi;\\
d_A *\phi&=0.\nonumber
\end{align}
For this elliptic PDE, because $M$ is noncompact, a suitable boundary condition is needed to study its solution space. In [6], [7], Mazzeo and Witten considered the asymptotic boundary condition associated with a knot (or link) $K\subset S^3\times \{0\}$. \footnote{To be more precise, they studied the local models defined on $\mathbb{R}^3\times \mathbb{R}^+$, which can be also regarded as the local model for $Y\times \mathbb{R}^+$ for any 3-manifold $Y$.} This condition requires solutions asymptotic to Nahm pole solution near any boundary point $(p,0)\in (S^3-K)\times\{0\}$ and asymptotic to Nahm pole singular solution near $(p,0)\in K\times\{0\}$. These two model solutions will also be described in Section 2.1. We use the notation $(NP)_K$ to denote this boundary condition.\\

Let $y$ be the variable on $\mathbb{R}^+$. Besides the boundary condition defined on $y=0$, an asymptotic boundary condition is also required as $y$ goes to infinity. One reasonable condition given by Mazzeo and Witten [6] assumes the deviation of $(A,\phi)$ from the Nahm pole solution is of order $\frac{1}{y}$ as well as $F_A$ and $|\nabla_A\phi|$ are of the order $\frac{1}{y^2}$.\footnote{In their case defined on $\mathbb{R}^3\times \mathbb{R}^+$, the asymptotic behavior as $x$ goes to infinity is also needed. So the condition they assumed is that the deviation of $(A,\phi)$ from the Nahm pole solution is of order $\frac{1}{r}$ as well as $F_A$ and $|\nabla_A\phi|$ are of the order $\frac{1}{r^2}$, where $r=\sqrt{|x|^2+y^2}$}\\

Another reasonable condition is developed by works of Taubes and He. In [8], Taubes considered $PSL(2;\mathbb{C})$-connections $\mathbb{A}=A+i\phi$    
with $(A,\phi)$ satisfies KW equation. Under this setting, a complexified curvature can be defined as
\begin{align*}
F_{\mathbb{A}}=F_A-\phi^2+id_A\phi.
\end{align*}
So we call $(A,\phi)$ a $PSL(2;\mathbb{C})$ flat connection if $F_{\mathbb{A}}=0$. In [4], S. He proved that for any solution to (1.1) which converges to a flat $PSL(2;\mathbb{C})$-connection in $C^2$-sense will decay exponentially as $y$ goes to infinity. Therefore, this condition is an alternative one for replacing the condition provided by Mazzeo and Witten. In this paper, we use this as our asymptotic boundary condition as $y$ goes to infinity. i.e. $(A,\phi)$ converges to the $PSL(2;\mathbb{C})$ flat connection in $C^2$-sense.\footnote{ In fact, in [4], it is sufficient for $(A,\phi)$ converging to a flat connection in $L^2_2$-sense.}\\

Under this setting, for any link $K$ in $S^3$, we define the moduli space $\mathfrak{M}_K$ to be the collection of $(A,\phi)$ satisfy (1.1), $(NP)_K$ and asymptotic to the flat $PSL(2;\mathbb{C})$-connection as $y$ goes to infinity. In this paper, we prove the following theorem.
\begin{theorem}
For $K=\emptyset$, there exists $C>0$ such that
\begin{align*}
\int_{S^3\times \mathbb{R}^+}|F_A|^2\leq C 
\end{align*}
for all $(A,\phi)\in\mathfrak{M}_{K=\emptyset}$
\end{theorem}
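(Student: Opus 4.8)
The plan is to trade $\int_M|F_A|^2$ for boundary data through an integral identity, and then to evaluate the two contributions separately: the end $y\to\infty$, where S.~He's exponential decay applies, and the collar $y\to 0$, where the fixed Nahm pole model controls everything. I would first record that for $K=\emptyset$ the limiting configuration is gauge-trivial: since $\pi_1(S^3)=1$, every flat $PSL(2;\mathbb{C})$-connection on $S^3$ is gauge equivalent to the trivial one, and combining this with the second equation $d_{A_\infty}^*\phi_\infty=0$ and $H^1(S^3;\mathbb{R})=0$ (so that a $d_{A_\infty}$-harmonic $1$-form vanishes once $A_\infty$ is trivial) the limit of any $(A,\phi)\in\mathfrak{M}_\emptyset$ is gauge equivalent to $(0,0)$; in particular $F_A\to 0$. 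By He's theorem the convergence is exponential, so on $S^3\times[T_0,\infty)$ one has $|F_A|+|\phi|+|\nabla_A\phi|\le c\,e^{-\delta y}$, with $\delta$ universal (it is fixed by the linearization at $(0,0)$) and the prefactor $c$ to be made uniform later.

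For the identity, I would rewrite the first equation as $*F_A=d_A\phi+*(\phi\wedge\phi)$ and use the Bianchi identity $d_AF_A=0$ to obtain the pointwise formula
\begin{align*}
|F_A|^2\,\mathrm{dvol}=|\phi\wedge\phi|^2\,\mathrm{dvol}-d\Big(\tfrac13\,\mathrm{tr}(\phi\wedge\phi\wedge\phi)+\mathrm{tr}(F_A\wedge\phi)\Big).
\end{align*}
Integrating this over $S^3\times[\varepsilon,T]$, sending $T\to\infty$ so that the slice term at $y=T$ vanishes by the exponential decay recorded above, and then examining the limit $\varepsilon\to 0$, reduces the theorem to a bound, uniform in $\varepsilon$ and in the solution, on
\begin{align*}
\int_{S^3\times[\varepsilon,\infty)}|\phi\wedge\phi|^2-\int_{S^3\times\{\varepsilon\}}\Big(\tfrac13\,\mathrm{tr}(\phi^{\wedge 3})+\mathrm{tr}(F_A\wedge\phi)\Big).
\end{align*}

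Each of these two terms diverges as $\varepsilon\to 0$, at rate $O(\varepsilon^{-3})$ since $\phi\sim y^{-1}$ near the pole, but their divergent parts are determined solely by the Nahm pole model solution on $S^3\times\mathbb{R}^+$ — which is fixed once the round metric on $S^3$ is fixed — and they cancel identically; one checks this cancellation directly on the model, where the left-hand side of the identity is already finite. The surviving ``finite part'' is a cubic/bilinear expression in the deviation $(A-A^{\mathrm{NP}},\phi-\phi^{\mathrm{NP}})$ over a collar $S^3\times(0,1]$, together with $\int_{S^3\times(0,1]}|F_A|^2$ itself. Controlling it by the Mazzeo--Witten Fredholm theory for the Nahm pole operator (the deviation lies in a weighted space with positive weight and obeys a linear elliptic estimate), together with interior elliptic estimates on a compact middle region $S^3\times[1,T_0]$ — which also feed back to make the constant $c$ of the first paragraph uniform — then produces the universal constant $C$.

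The main obstacle is exactly the uniformity in the last step: the Nahm pole boundary condition gives only a solution-dependent rate of approach to the model, and one must make the resulting finite part — equivalently, the deviation of $(A,\phi)$ from $(A^{\mathrm{NP}},\phi^{\mathrm{NP}})$ on the collar — bounded independently of the element of $\mathfrak{M}_\emptyset$. I expect to close this by treating the identity above as an a priori estimate rather than as an asymptotic expansion: it ties the size of the deviation near $y=0$ to $\int_M|F_A|^2$ and to an exponentially small tail, so an unbounded sequence of solutions would be self-contradictory, provided one also rules out the Taubes-type degeneration in which $\|\phi\|$ escapes to infinity. The latter should not occur here, since the model already prescribes the leading singularity of $\phi$ and, with $K=\emptyset$, there is no knot along which extra concentration could form. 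Making this bootstrap quantitative — pinning down ``a large deviation is impossible'' — is the part I anticipate will require the most care.
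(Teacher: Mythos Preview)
Your integral-identity strategy is in the right spirit, and the paper does begin with essentially the same computation (its (3.4)--(3.9)). But there is a genuine gap at exactly the point you flag, and the mechanisms you propose --- Mazzeo--Witten Fredholm theory, interior elliptic estimates, a bootstrap/contradiction argument --- do not produce solution-independent bounds. Fredholm and regularity theory tell you the deviation $\rho=\phi-\phi^{\mathrm{model}}$ vanishes to order $y$ and lies in a weighted space, but its \emph{norm} in that space depends on the solution; interior elliptic estimates on $S^3\times[1,T_0]$ likewise have solution-dependent constants unless you already control a global energy; and a contradiction argument would need a compactness theorem for $\mathfrak{M}_\emptyset$ that presupposes the very bound you are proving. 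Ruling out Taubes-type degeneration ``because the model prescribes the leading singularity'' is not an argument: the leading term is fixed, but nothing you have written bounds the subleading coefficients.

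The paper closes the gap by two ingredients absent from your sketch. First, the reference solution is not the bare Nahm pole but S.~He's explicit global solution $(A^H,\phi^H)\in\mathfrak{M}_\emptyset$; the paper stresses that the proof breaks on a general $Y\times\mathbb{R}^+$ precisely for lack of such a solution. Second --- and this is the real point --- the paper does not stop at your identity but passes through the Weitzenb\"ock formula, which brings in $|\nabla_A\phi|^2$ and the Ricci term $2|\phi|^2$. After completing a square these become the \emph{additional} nonnegative terms $|\bar\nabla_A\phi|^2+|{*_3}\partial_y\phi+\phi^2|^2+2|\rho|^2$ on the left of the key identity
\[
\int_M |F_A|^2+|\bar\nabla_A\phi|^2+|{*_3}\partial_y\phi+\phi^2|^2 + 2|\rho|^2 - 4\,\mathrm{tr}(\phi^H\wedge*\rho)=C_0,
\]
and the entire remaining difficulty is to absorb the single indefinite cross term $\int 4\,\mathrm{tr}(\phi^H\wedge*\rho)$ back into those positive terms. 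This is done by a concrete algebraic argument (the paper's Proposition~3.3): one decomposes $p^*(T^*S^3)\otimes\mathfrak{su}(2)=V^1\oplus V^2\oplus V^3$ into eigenbundles of $*_3[\omega,\cdot]$, and uses an ODE identity of the form $f^{-1}\partial_y(f|\rho^{(1)}|)=\partial_y|\rho^{(1)}|+2h|\rho^{(1)}|\pm|\alpha|^2|\omega|$ to trade $2h|\rho^{(1)}|$ (which is $|2\,\mathrm{tr}(\phi^H\wedge*\rho)|$) against $\tfrac12\int|{*_3}\partial_y\phi+\phi^2|^2+\int|\rho|^2$ plus a universal constant. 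Your identity keeps only $\int|\phi\wedge\phi|^2$ on the bulk side, which has no absorbing capacity; this is why the Weitzenb\"ock step is not optional here.
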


To motivate readers for studies of $\mathfrak{M}_K$ and Theorem 1.1, we should start with the conjecture given by E. Witten. In [9], Witten conjectured:\\
\ \\
{\bf Witten Conjecture}. The generating function of the numbers of Kapustin-Witten solutions on $\mathbb{R}^3\times\mathbb{R}^+$ satisfying $(NP)_K$ is Jones polynomial.\\
\ \\
 To formulate this conjecture, we need the following settings first. For any knot or link $K\subset \mathbb{R}^3$,
\begin{align*}
\bullet &\mbox{ Denote by }\mathfrak{M}_K^{\mathbb{R}^3} \mbox{ the space of solutions of KW-equation on }\mathbb{R}^3\times \mathbb{R}^+, (NP)_K;\\&\mbox{the deviation of }(A,\phi)\mbox{ from the Nahm pole solution is of order }\frac{1}{\sqrt{|x|^2+y^2}};\\
& F_A, |\nabla_A\phi| \mbox{ are of the order } \frac{1}{|x|^2+y^2}.\\
\bullet &\mbox{ The topological charge is defined to be }p(A):=\frac{1}{4\pi^2}\int_M tr(F_A)^2. \mbox{ Any  finite}\\
&\mbox{ difference of this topological charge between any two elements in }\mathfrak{M}_K^{\mathbb{R}^3}\mbox{ is}
\\
& \mbox{  an integer}.
\end{align*}
Here the second bullet is just a straightforward result of Chern-Simons theory, see [1]. The interesting conjecture made by Witten involves the relation between $\mathfrak{M}_K^{\mathbb{R}^3}$ and Jones polynomials, which is the following. By choosing a suitable number $c$ such that $p(A)-c\in \mathbb{Z}$, we can define $l_n$ to be the counting of solutions, with suitable sign, in $\{(A,\phi)\in\mathfrak{M}_K|p(A)-c =n\}$ for any $n\in \mathbb{Z}$. Then the polynomial $J(t):=\sum l_n t^n$ will be the Jones polynomial for the knot $K$. It is also conjectured by Witten that the moduli space $\mathfrak{M}_K^{\mathbb{R}^3}$ is related to the categorification of the Jones polynomials, namely the Khovanov homology.\\

The first difficulty of proving Witten's conjecture is the compactness problem for $\mathfrak{M}_K^{\mathbb{R}^3}$. To attack this problem, the first important question is to find some energy bound for the space of solutions. It is also natural to believe this is true for $\mathfrak{M}_K$. Theorem 1.1 shows that in this case $\int|F_A|^2$ has a universal bound when $K=\emptyset$. Meanwhile, by the conjecture and the structure of Jones polynomials, $p(A)$ is finite. This is also implied by our theorem when $K=\emptyset$.\\

The first part of the proof of Theorem 1.1 uses the Weitzenb\"{o}ck formula technique given by Mazzeo and Witten [6]. The main difficulty is: When we apply the formula, a Ricci curvature term $Ric(\phi,\phi)$ involved here which has order $y^{-2}$. In particular, it is not integrable.\\

The proof of Theorem 1.1 also . One might expect that Theorem 1.1 will continue to hold for manifold $Y\times \mathbb{R}^+$ as long as $Y$ equipped a metric with positive Ricci curvature. However, the argument in this paper relies on the presence of an exact solution provided by S. He [3], which is absent in general. Meanwhile, the positivity of Ricci curvature defined on $Y$ seems to be necessary for the boundness of $\|F_A\|_{L^2}^2$. Note that there are counter examples to Theorem 1.1 false if $Y$ has negative Ricci curvature, given by He and Mazzeo in [5].

\section{Nahm pole solutions and asymptotic boundary condition}
To describe the boundary condition, we need to introduce the Nahm pole boundary condition studied in [6] and Nahm pole singular boundary condition studied in [7]. This gives us the formal definition of $(NP)_K$.\\

First of all, we will introduce two solutions $(A^\emptyset,\phi^\emptyset)$, $(A^{L},\phi^{L})$ of KW-equation defined on $\mathbb{R}^3\times \mathbb{R}^+$, see [6] and [7]. Then we call a solution $(A,\phi)$ satisfies $(NP)_K$ if and only if it converges to one of them by taking scaling limit.

\subsection{Model solutions and asymptotic boundary condition}
First solution we will introduce is the Nahm pole solution. Let $\{\mathfrak{t}_i\}_{i=1,2,3}$ be the generators of $\mathfrak{su}(2)$ which satisfy $[\mathfrak{t}_i,\mathfrak{t}_j]=\epsilon_{ijk}\mathfrak{t}_k$. Here $\epsilon_{ijk}$ is the permutation sign. Meanwhile, we use the coordinate $(x_1,x_2,x_3,y)$ to parametrize $\mathbb{R}^3\times \mathbb{R}^+$. Under these notations, we define
\begin{align*}
(A^{\emptyset},\phi^{\emptyset})=(0,\sum_i\frac{\mathfrak{t}_i}{y}dx_i).
\end{align*}
It is easy to check that this solution satisfies (1.1) on $\mathbb{R}^3\times \mathbb{R}^+$. Moreover, in [6], Mazzeo and Witten proved that there is no other solution of KW-equation asymptotic to $(A^{\emptyset},\phi^{\emptyset})$. This model solution will be used to describe the asymptotic behavior at those points on $S^3-K$.\\

The second model was initially defined by Witten in [9] (also appeared in [7]) which is usually called Nahm pole singular solution. In the paper, Witten constructed a family of solutions, $(A^{\tau},\phi^{\tau})$, satisfying KW equation with singularity along $x_3$-axis for any nonnegative integer $\tau$. In this paper, we just consider the simplest case with $\tau=1$ ($\tau=0$ is corresponding to the case $(A^{\emptyset},\phi^{\emptyset})$). Then the solution can be written in the following way. Let $x_1^2+x_2^2=r^2$, we define
\begin{align*}
&\mathfrak{f}_1=\frac{x_1}{\sqrt{r^2+y^2}}\mathfrak{t}_1-\frac{x_2}{\sqrt{r^2+y^2}}\mathfrak{t}_2,\\
&\mathfrak{f}_2=\frac{x_1}{\sqrt{r^2+y^2}}\mathfrak{t}_2+\frac{x_2}{\sqrt{r^2+y^2}}\mathfrak{t}_1,\\
&\mathfrak{f}_3=\Big(1+\frac{y^2}{r^2+y^2}\Big)\mathfrak{t}_3.
\end{align*}
The Nahm pole singular solution will be
\begin{align*}
(A^S,\phi^S)=\Big(-\frac{x_2}{r^2+y^2}\mathfrak{t}_3dx_1+\frac{x_1}{r^2+y^2}\mathfrak{t}_3dx_2,\sum_i\frac{\mathfrak{f}_i}{y}dx_i\Big)
\end{align*}
defined on $\mathbb{R}^3\times\mathbb{R}^+$. This model solution will be used to describe the asymptotic behavior at those points on $K$ with the tangent of $K$ parallel to $x_3$-axis.\\

To explain the boundary condition in [6] and [7], we use the following notation. For any $p\in S^3$, one can use the spherical projection to define the local coordinate chart $\varphi_p:\mathbf{B}_1\rightarrow U_p\subset S^3$ where $U_p$ is the open ball centered at $p$. We call this coordinate chart {\bf standard} if $\partial_{x_3}$ is parallel to the tangent of $K$ whenever $p\in K$.

\begin{definition}
Let $(A,\phi)\in Conn(P)\times \Omega^1(M;\mathfrak{su}(2))$ and $p\in S^3$. We say that $(A,\phi)$ satisfies $(NP)_K$ if and only if the following two conditions hold. Firstly there exists $g\in SU(2)$ such that
\begin{align}
\lim_{s\rightarrow 0} \varphi^*_p(A,\phi)(s\vec{x},sy)= g(A^\emptyset,\phi^\emptyset)g^{-1}(\vec{x},y)\mbox{  when }p\in S^3-K.
\end{align}
Secondly, there exists $g\in SU(2)$ such that
\begin{align}
\lim_{s\rightarrow 0}\varphi^*_p (A,\phi)(s\vec{x},sy)=g(A^S,\phi^S)g^{-1}(\vec{x},y)\mbox{  when }p\in K \mbox{ and }\varphi_p\mbox{ {\bf standard}}.
\end{align}
\end{definition}

To define the asymptotic behavior as $y\rightarrow \infty$, by the observation in [8] (also appeared in [4]), one can regard  $A+i\phi$ as a $PSL(2;\mathbb{C})-$connection. We suppose that all the solutions we considered converges to a flat $PSL(2;\mathbb{C})-$connection, i.e. $(F_A-\phi^2,d_A\phi)\rightarrow 0$ as $y\rightarrow \infty$ in $C^2$-sense. We define the moduli space $\mathfrak{M}_K$ as the following.

\begin{definition}
\begin{align*}
\mathfrak{M}_K:=\{(A,\phi)\in &Conn(P)\times \Omega^1(S^3\times\mathbb{R}^+;\mathfrak{su}(2))|\\
&(A,\phi)\mbox{ satisfies Kapustin-Witten equation and }(NP)_K,\\
&\mbox{ and converges to a flat }PSL(2,\mathbb{C})\mbox{ connection as }y\rightarrow \infty \},\\
\hat{\mathfrak{M}}_K:=\mathfrak{M}_K/\mathcal{G}.
\end{align*}
Here $\mathcal{G}=C^{\infty}(S^3\times\mathbb{R}^+;SU(2))$ is the gauge group with the transformation 
\begin{align*}
{\bf g}\cdot(A,\phi)=({\bf g}A{\bf g}^{-1}+(d{\bf g}){\bf g}^{-1}, {\bf g}\phi{\bf g}^{-1}).
\end{align*}
\end{definition}

In this paper, we only consider the case that $K=\emptyset$. The elements in $\mathfrak{M}_{K=\emptyset}$ are usually called Nahm pole solutions. Mazzeo and Witten proved the following proposition in [6].
\begin{pro}
For any two solutions $(A_0,\phi_0)$, $(A_1,\phi_1)\in \mathfrak{M}_{K=\emptyset}$, there exists ${\bf g}\in \mathcal{G}$ such that
\begin{align}
(A_0,\phi_0)-{\bf g}\cdot(A_1,\phi_1)=\Big(\sum_{i=2}^{\infty}a_i(\vec{x})y^i, \sum_{i=1}^{\infty}q_i(\vec{x})y^i\Big)
\end{align}
for some smooth $\mathfrak{su}(2)$-valued 1-forms $a_i,q_i$ defined on $S^3\times \mathbb{R}^+$.
\end{pro}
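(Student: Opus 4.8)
The plan is to treat the two boundary conditions separately—the Nahm pole behavior as $y\to 0$ and the flat $PSL(2;\mathbb{C})$ limit as $y\to\infty$—and to show that the difference of any two solutions, after a gauge transformation, has a power series expansion in $y$ whose terms vanish to the claimed orders at $y=0$. The key structural input is the uniqueness statement of Mazzeo--Witten: there is essentially only one Nahm pole solution near the boundary, and its leading terms are rigid. First I would recall the indicial/boundary analysis from [6]: after passing to suitable Fermi-type coordinates $(\vec{x},y)$ and expanding $(A,\phi)$ in powers of $y$ near $y=0$, the Kapustin--Witten equations together with $(NP)_{\emptyset}$ force $A$ to vanish to order $y^0$ with no linear term (indeed $A=\mathcal{O}(y^2)$ after gauge fixing) and force $\phi$ to have the prescribed singular leading term $\sum_i \frac{\mathfrak{t}_i}{y}dx_i$ (in the frame adapted to $p$), so that the \emph{regular} part of $\phi$ starts at order $y$. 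This is exactly the content of saying the solution is asymptotic to $(A^\emptyset,\phi^\emptyset)$ in the scaling sense of Definition 2.1.

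Next I would set up a gauge in which the two solutions $(A_0,\phi_0)$ and $(A_1,\phi_1)$ are directly comparable. Since both converge to the same model solution up to conjugation by some $g_j\in SU(2)$ at each boundary point, and since the model solution is literally the same $(A^\emptyset,\phi^\emptyset)$, one can absorb the constant gauge transformations $g_0,g_1$ and then build a global gauge transformation ${\bf g}\in\mathcal{G}$—using the contractibility of the relevant gauge orbit and a partition-of-unity/patching argument over $S^3$, interpolated trivially in $y$—so that ${\bf g}\cdot(A_1,\phi_1)$ and $(A_0,\phi_0)$ share the \emph{same} singular leading data of $\phi$ and the same vanishing of $A$ through the relevant orders. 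The point of the construction in equation (2.4) is then that the singular $y^{-1}$ term of $\phi$ and the $y^0$ and $y^1$ terms of $A$ \emph{cancel in the difference}, and likewise the $y^0$ term of $\phi$; what survives is $\big(\sum_{i\ge 2}a_i(\vec{x})y^i,\ \sum_{i\ge 1}q_i(\vec{x})y^i\big)$.

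To pin down that the series expansions actually exist (i.e.\ that both solutions are smooth down to $y=0$ in the appropriate sense and admit genuine Taylor expansions in $y$ with smooth coefficients on $S^3$), I would invoke the edge/$b$-calculus elliptic regularity for the Nahm pole boundary value problem established in [6]: the linearized operator is elliptic in the edge sense, the indicial roots are explicit, and polyhomogeneity of solutions follows, which for the integer-spaced indicial set here gives an honest power series. The coefficients $a_i,q_i$ are then determined recursively by substituting the ansatz into (1.1) and matching powers of $y$; smoothness in $\vec{x}$ propagates through this recursion. Finally I would note that since both solutions also converge to flat $PSL(2;\mathbb{C})$ connections at $y\to\infty$ with exponential decay by [4], the difference (and hence all the $a_i,q_i$ contributions, reorganized) is well-controlled at infinity, so the formal expansion (2.4) is a statement about the behavior near $y=0$ and the right-hand side is a smooth section on all of $S^3\times\mathbb{R}^+$.

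The main obstacle I expect is the global gauge-fixing step: ensuring that the pointwise freedom $g\in SU(2)$ in Definition 2.1 can be chosen to vary smoothly with $p\in S^3$ and be extended to a global element of $\mathcal{G}$ without introducing new lower-order terms in the difference—this requires care because a priori the limiting frame is only defined up to the stabilizer, and one must check that the patching does not obstruct. The second, more technical obstacle is establishing polyhomogeneity with \emph{smooth} (not merely $L^2$ or Hölder) coefficients, which rests on bootstrapping the edge-regularity estimates of [6]; I would cite this rather than reprove it.
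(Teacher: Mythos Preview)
The paper does not actually prove this proposition: it is attributed to Mazzeo--Witten and simply cited as a result from [6], with no argument given. Your proposal is therefore not a comparison against a proof in the paper but rather a sketch of the argument in [6] itself, and as such it is broadly accurate: the indicial analysis of the linearized KW operator at the Nahm pole, the resulting integer-spaced indicial roots, and the edge-calculus regularity/polyhomogeneity theory are precisely the mechanism by which Mazzeo and Witten obtain the expansion you describe, with the orders $y^2$ for $A$ and $y^1$ for $\phi$ coming from the explicit indicial root computation in [6].

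One small caution: your discussion of the global gauge-fixing obstacle is a reasonable concern, but in the specific setting of $S^3$ with $K=\emptyset$ and structure group $SU(2)$ it is not the serious issue you suggest. The leading Nahm pole term identifies a global frame (an isomorphism between $\mathfrak{su}(2)$ and $T^*S^3$ up to sign), and the bundle is trivial, so the patching is essentially automatic; the real content is entirely in the boundary regularity from [6], which you correctly identify and correctly propose to cite rather than reprove.
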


Finally, we introduce an important model solution constructed by S. He. In [3], He constructed two solutions of KW equation. Both of them satisfy the asymptotic boundary condition $(NP)_{K=\emptyset}$ and are asymptotically flat. We consider one of them, which can be written as:
\begin{align*}
(A^H,\phi^H)=\Bigg(\frac{6e^{2y}}{e^{4y}+4e^{2y}+1}\omega,\frac{6(e^{2y}+1)e^{2y}}{(e^{4y}+4e^{2y}+1)(e^{2y}-1)}\omega\Bigg).
\end{align*}
where 
\begin{align}
\omega=\sum_{i=1}^3 \mathfrak{t}_i\mathfrak{e}_i
\end{align}
with $\{\mathfrak{e}_i\}$ be the orthonormal co-frame on $S^3$.
\begin{remark}
One can also choose the other solution in [3], which can be written as 
\begin{align*}
\Big(\frac{2(e^{4y}+e^{2y}+1)}{e^{4y}+4e^{2y}+1}\omega,\frac{6(e^{2y}+1)e^{2y}}{(e^{4y}+4e^{2y}+1)(e^{2y}-1)}\omega\Big).
\end{align*}
This solution is also asymptotic to the flat connection under a gauge transformation. So we can prove Theorem 1.1 by using either one of them.
\end{remark}

One can easily check the following property for this model solution.
\begin{pro}
$C_H:=\|F_{A^H}\|_{L^2}+\|dy\wedge\partial_y\phi^H-*({\phi^H})^2\|_{L^2}<\infty$.
\end{pro}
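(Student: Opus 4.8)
The plan is to use the $S^3$-homogeneity of He's solution to reduce each of the two $L^2$-norms to an ordinary integral over $y\in\mathbb R^+$, and then to check convergence of that integral at the ends $y\to 0$ and $y\to\infty$; only the end $y\to 0$ for the second term is delicate.

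First I would write $A^H=f(y)\,\omega$ and $\phi^H=h(y)\,\omega$, with $\omega=\sum_i\mathfrak t_i\mathfrak e_i$ and $f,h$ the explicit scalar functions in the definition. Using the structure equations $d\mathfrak e_i\propto\epsilon_{ijk}\,\mathfrak e_j\wedge\mathfrak e_k$ of the orthonormal coframe on $S^3$ together with $[\mathfrak t_i,\mathfrak t_j]=\epsilon_{ijk}\mathfrak t_k$, one computes
\begin{align*}
F_{A^H}=f'(y)\,dy\wedge\omega+\bigl(f(y)^2+a\,f(y)\bigr)\,\Omega,\qquad \Omega:=\sum_i\mathfrak t_i\,(*_{S^3}\mathfrak e_i),
\end{align*}
for a universal constant $a$. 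Since $dy\wedge\omega$ and $\Omega$ are pointwise orthogonal, $|F_{A^H}|^2=c_1 f'(y)^2+c_2\bigl(f(y)^2+a f(y)\bigr)^2$ with $c_1,c_2>0$ the pointwise norms of $dy\wedge\omega$ and $\Omega$, which are constant over $S^3$. In the same way $(\phi^H)^2=\tfrac12 h(y)^2[\omega\wedge\omega]$ is a constant multiple of $h(y)^2\Omega$, and on the product metric $*\Omega$ is a constant multiple of $dy\wedge\omega$; combining this with $dy\wedge\partial_y\phi^H=h'(y)\,dy\wedge\omega$ gives
\begin{align*}
dy\wedge\partial_y\phi^H-*(\phi^H)^2=\bigl(h'(y)+b\,h(y)^2\bigr)\,dy\wedge\omega,
\end{align*}
for a universal constant $b$ which, with the natural orientation on $S^3\times\mathbb R^+$, equals $1$. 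Hence the two norms become $\mathrm{Vol}(S^3)\int_0^\infty\bigl[c_1 (f')^2+c_2(f^2+af)^2\bigr]\,dy$ and $c_3\,\mathrm{Vol}(S^3)\int_0^\infty\bigl(h'+h^2\bigr)^2\,dy$, and it remains to prove these are finite.

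Convergence at $y\to\infty$ is immediate: from the explicit formulas $f,h,f',h'$ are all $O(e^{-2y})$, so both integrands are $O(e^{-4y})$ and integrable on $[1,\infty)$. At $y\to0$ the first integrand is harmless, because $f$ extends smoothly with $f(0)=1$, so $c_1 (f')^2+c_2(f^2+af)^2$ stays bounded. The only genuine point is the second integrand near $y=0$: there $h(y)\sim\tfrac1y$, so $h'(y)^2$ and $h(y)^4$ individually blow up like $y^{-4}$, and integrability can only come from a cancellation inside $h'(y)+h(y)^2$. Taylor-expanding the explicit function one finds $h(y)=\tfrac1y-\tfrac13 y+O(y^2)$ --- in particular the constant-order term vanishes, which is not accidental but reflects that $\phi^H$ differs from the $1/y$ Nahm pole model only at orders $y^i$ with $i\geq1$ (compare the expansion in the proposition above) --- so that the $y^{-2}$ and $y^{-1}$ contributions cancel and $h'(y)+h(y)^2=-1+O(y)$ is bounded near $0$. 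Thus the integrand is bounded on $(0,1]$, the integral converges, and $C_H<\infty$. I expect the one real obstacle to be verifying precisely this cancellation, i.e.\ that for the correct constant $b$ the coefficients of $y^{-2}$ and of $y^{-1}$ in $h'+b\,h^2$ both vanish; this is the structural manifestation of the Nahm pole that makes the proposition true, since a surviving $(h')^2\sim y^{-4}$ would make the integral diverge.
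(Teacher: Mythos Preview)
Your argument is correct: both norms reduce to one-variable integrals in $y$, the behaviour at $y\to\infty$ is exponentially small, and at $y\to0$ the only issue is the second term, where the cancellation $h'(y)+h(y)^2=O(1)$ indeed holds because $h(y)=\coth(y)\cdot\dfrac{6e^{2y}}{e^{4y}+4e^{2y}+1}=\dfrac{1}{y}+O(y)$ has no constant term in its Laurent expansion.

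The paper's proof reaches the same conclusion by a shorter, more structural route. After writing out $F_{A^H}$ explicitly (your first step), it does \emph{not} expand $h'+h^2$; instead it invokes the Kapustin--Witten equation itself to identify $dy\wedge\partial_y\phi^H-*(\phi^H)^2$ with the Hodge dual of the tangential $\omega^2$-component of $F_{A^H}$, which has just been computed and is visibly $L^2$. In other words, the cancellation you verify by Taylor expansion is, in the paper's argument, an immediate consequence of the tangential part of $F_A-\phi\wedge\phi=*d_A\phi$. Your approach has the small advantage of not needing to quote that $(A^H,\phi^H)$ solves KW, but the paper's approach explains conceptually why the delicate $y^{-2}$ cancellation is forced rather than accidental, and dispatches the second $L^2$-bound in one line once the first is known.
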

\begin{proof}
One can compute directly to obtain
\begin{align*}
F_{A^H}=dA^H+A^H\wedge A^H=\frac{12(e^{2y}-e^{6y})}{(e^{4y}+4e^{2y}+1)^2}dy\wedge \omega+\frac{36e^{4y}}{(e^{4y}+4e^{2y}+1)^2}\omega^2.
\end{align*}
Since $(A^H,\phi^H)$ satisfies Kapustin-Witten equation, we have
\begin{align*}
dy\wedge\partial_y\phi^H-*({\phi^H})^2=*\Bigg(\frac{36e^{4y}}{(e^{4y}+4e^{2y}+1)^2}\omega^2\Bigg).
\end{align*}
So both of them are in $L^2$.
\end{proof}

\section{Proof of the main theorem}
\subsection{Gauge transformations and Taubes' maximum principle}
Let $(A,\phi)\in \mathfrak{M}_{K=\emptyset}$. By Proposition 2.3, there exists a gauge transformation ${\bf g}\in C^{\infty}(M;SU(2))$ such that
\begin{align}
{\bf g}\cdot(A,\phi)=(A^H,\phi^H)+(\alpha,\rho)
\end{align}
with $(\alpha,\rho)=(\sum_{i=2}^{\infty}a_i(x)y^i,\sum_{i=1}^{\infty}q_i(x)y^i)$ and $a_i,q_i\in \Omega^1 (M;\mathfrak{su}(2))$. We write the left hand side of (3.1) as our new $(A,\phi)$ in the following discussion.\\

By using the parallel transport, we can choose a gauge transformation such that $A$ has no $dy$ part. So one can write $\nabla_A=\bar{\nabla}_A+\frac{\partial}{\partial y}dy$ with $\bar{\nabla}_A$ be the covariant derivative along directions on $S^3$. In addition,
\begin{align}
(A,\phi)=(A^H,\phi^H)+(\alpha,\rho)
\end{align}
with no $dy$ part and $\alpha=O(y^2), \rho=O(y)$. In the rest of our paper, we fix our gauge such that $A_y=0$ and simply denote $\frac{\partial}{\partial y}$ by $\partial_y$.\\

In addition, the maximum principle argument given by Taubes [8] tells us
\begin{pro}
Let $M=Y\times [0,1]$ with a product metric and $(A,\phi=\mathfrak{m}+\phi_ydy)$ be a solution of KW-equation defined on $M$. Then we have the following equation
\begin{align}
\frac{1}{2}(-\partial_y^2+d^*d)|\phi_y|^2+|\partial_y\phi_y|^2+|\nabla_A\phi_y|^2+2|[\phi_y, \mathfrak{m}]|^2=0.
\end{align}
\end{pro}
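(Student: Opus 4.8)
The plan is to rewrite the Kapustin--Witten system along the product $M=Y\times[0,1]$, extract the second--order equation obeyed by the single section $\phi_y$, and feed it into the Bochner identity for a section. Every term in (3.3) is gauge invariant once $\partial_y$ is read as the covariant derivative $\nabla_{A,\partial_y}$, so I may first impose the temporal gauge $A_y=0$; then $A$ is a $y$--dependent connection on $Y$, $\nabla_A=\bar\nabla_A+dy\wedge\partial_y$, and $\phi=\mathfrak m+\phi_y\,dy$ with $\mathfrak m\in\Omega^1(Y;\mathfrak{su}(2))$. Decomposing the two equations of (1.1) by form degree on the three--manifold $Y$ --- the $2$--form and the $dy\wedge(\text{1--form})$ parts of $F_A-\phi\wedge\phi=*d_A\phi$, together with $d_A*\phi=0$ --- produces a first--order evolution system whose three members express $\partial_y A$ through $*_Y\bar d_A\mathfrak m$ and $[\mathfrak m,\phi_y]$, express $\partial_y\mathfrak m$ through $\bar d_A\phi_y$, $*_Y\bar F_A$ and $*_Y(\mathfrak m\wedge\mathfrak m)$, and give $\partial_y\phi_y=\pm\bar d_A^*\mathfrak m$, the signs being fixed once an orientation convention on $M$ is chosen.

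Next I would differentiate the last equation in $y$. Using $\partial_y(\bar d_A^*\mathfrak m)=\bar d_A^*(\partial_y\mathfrak m)-[\partial_y A,\mathfrak m]_\flat$ (where $[\,\cdot\,,\,\cdot\,]_\flat$ is the pointwise pairing of $\mathfrak{su}(2)$--valued $1$--forms built from the Lie bracket, so $[a,a]_\flat=0$) and substituting the first two equations, two simplifications occur: the curvature contribution, which appears as $\bar d_A^*(*_Y\bar F_A)=-*_Y\bar d_A\bar F_A$, vanishes by the Bianchi identity; and the two cubic terms $\bar d_A^*\bigl(*_Y(\mathfrak m\wedge\mathfrak m)\bigr)=-*_Y[\bar d_A\mathfrak m\wedge\mathfrak m]$ and the $*_Y\bar d_A\mathfrak m$ part of $[\partial_y A,\mathfrak m]_\flat$ are equal and cancel. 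What is left is a clean second--order identity
\[
\nabla_A^*\nabla_A\phi_y \;=\; -\partial_y^2\phi_y+\bar\nabla_A^*\bar\nabla_A\phi_y \;=\; 2\sum_a\bigl[\mathfrak m_a,[\mathfrak m_a,\phi_y]\bigr],
\]
where $\{e^a\}$ is a local orthonormal coframe on $Y$ and $\mathfrak m=\sum_a\mathfrak m_a e^a$; the identification of $\nabla_A^*\nabla_A\phi_y$ with $-\partial_y^2\phi_y+\bar\nabla_A^*\bar\nabla_A\phi_y$ uses that $\partial_y$ is parallel for the product metric.

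To conclude, pair this with $\phi_y$. The Bochner identity for the section $\phi_y$ over the product $M$ reads $\langle\nabla_A^*\nabla_A\phi_y,\phi_y\rangle=\tfrac12(-\partial_y^2+d^*d)|\phi_y|^2+|\partial_y\phi_y|^2+|\nabla_A\phi_y|^2$, with no Ricci term --- because $\phi_y$ is a $0$--form, whose Bochner identity carries no Weitzenb\"ock curvature (here $d^*d$ is the Laplacian on functions over $Y$ and $\nabla_A\phi_y$ the covariant derivative of $\phi_y$ along $Y$, consistently with (3.3)). By ad--invariance of the inner product, $\langle[X,Y],Z\rangle=\langle X,[Y,Z]\rangle$ gives $2\sum_a\langle[\mathfrak m_a,[\mathfrak m_a,\phi_y]],\phi_y\rangle=-2\sum_a|[\mathfrak m_a,\phi_y]|^2=-2|[\phi_y,\mathfrak m]|^2$. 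Equating the two sides and transposing is exactly (3.3); an equivalent route to the same second--order identity is to contract the Weitzenb\"ock identity for the $1$--form $\phi$ over the parallel direction $\partial_y$, its Ricci term disappearing since $\mathrm{Ric}(\partial_y,\cdot)=0$.

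The only real work is the sign and normalization bookkeeping in the first two steps --- unwinding $*_M$, $d_A$ and $d_A^*$ across $Y^3\subset Y^3\times[0,1]$ and verifying that, for the chosen orientation, the $\bar F_A$ term is annihilated by Bianchi, the two cubic-in-$\mathfrak m$ terms cancel rather than add, and the surviving identity carries coefficient $+1$ on $\nabla_A^*\nabla_A\phi_y$ (so that no spurious multiple of $\bar\nabla_A^*\bar\nabla_A\phi_y$ is left over) with the double bracket on the right entering with the sign --- and the normalization dictated by (1.1) --- that feeds $2|[\phi_y,\mathfrak m]|^2$ into (3.3) positively, i.e. makes $|\phi_y|^2$ subharmonic. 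Everything else --- the Bianchi identity, the Bochner identity, and $\langle[X,Y],Z\rangle=\langle X,[Y,Z]\rangle$ --- is standard.
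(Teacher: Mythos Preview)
The paper does not actually prove this proposition; it simply attributes the identity to Taubes~[8] and uses its consequence (the maximum principle forcing $\phi_y\equiv 0$). So there is no ``paper's own proof'' to compare against.

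Your outline is the standard derivation and is essentially what Taubes does: put the connection in temporal gauge, split the KW system (1.1) across the product $Y\times[0,1]$ to obtain the three first-order equations for $\partial_yA$, $\partial_y\mathfrak m$, and $\partial_y\phi_y=\pm\bar d_A^*\mathfrak m$, differentiate the last one in $y$, and substitute. The Bianchi cancellation of the $\bar F_A$ term and the mutual cancellation of the two cubic-in-$\mathfrak m$ contributions are exactly the mechanisms that leave the clean second-order equation $\nabla_A^*\nabla_A\phi_y+2\sum_a[\mathfrak m_a,[\phi_y,\mathfrak m_a]]=0$; pairing with $\phi_y$ via the scalar Bochner identity then gives (3.3). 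Your reading of $|\nabla_A\phi_y|^2$ in (3.3) as the covariant derivative along $Y$ only (so that $|\partial_y\phi_y|^2$ is not double-counted) matches the paper's conventions. The caveat you flag yourself---that the sign and coefficient bookkeeping in the decomposition and in $\partial_y(\bar d_A^*\mathfrak m)$ has to be done carefully to land on the factor $2$ with the correct sign---is the only place where the argument could go wrong, and that step is not written out; but the strategy is sound and is the one Taubes uses.
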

Therefore, if $|\phi_y|(p)\rightarrow 0$ as $p$ goes to the boundary of $M$, then by standard maximum principle argument, $\phi_y=0$.\\

By using the same argument, S. He proved that $\phi_y=0$ for all $(A,\phi)\in \mathfrak{M}_{K=\emptyset}$ in [4]. This is because $\phi_y=0$ on $y=0$ by Proposition 2.3 and $\lim_{y\rightarrow \infty}\phi_y= 0$ as $(A,\phi)$ converges to the trivial connection.

\begin{remark}We should notice that, for any gauge transformation ${\bf g}\in C^{\infty}(M;SU(2))$,
\begin{align*}
{\bf g}\cdot(A,\phi)=({\bf g}A{\bf g}^{-1}+{\bf g}^{-1}d{\bf g}, {\bf g}\phi{\bf g}^{-1}).
\end{align*}
So the property $\phi_y=0$ is not depending on the gauge we choose.
\end{remark}

\subsection{Weitzenb\"{o}ck formula} Since $(A,\phi)$ is a solution of KW-equation, we have
\begin{align}
&0=\int_{S^3\times\{y>\varepsilon\}}|F_A-\phi^2-*d_A\phi|^2+|d_A*\phi|^2\\
=&\int_{S^3\times\{y>\varepsilon\}} |F_A-\phi^2|^2+2tr(F_A\wedge d_A\phi)-2tr(\phi^2\wedge d_A\phi)+|d_A\phi|^2+|d_A^*\phi|^2\nonumber
\end{align}
(Recall that $|u|^2=-tr(u\wedge *u)$). Applying integration by parts, (3.4) implies
\begin{align}
\int_{S^3\times \{y>\varepsilon\}}|F_A-\phi^2|^2+|d_A\phi|^2+|d_A^*\phi|^2=\frac{2}{3}\int_{S^3}\phi^3\Big\vert_{y=\varepsilon}-2\int_{S^3}\phi\wedge F_A\Big\vert_{y=\varepsilon}.
\end{align}
There will be no boundary term from $y=\infty$ in this equality by Theorem 7.1 in [7]. By Weizenb\"{o}ck formula tells us that
\begin{align}
\int_{S^3\times \{y>\varepsilon\}}|d_A\phi|^2+|d_A^*\phi|^2=\int_{S^3\times \{y>\varepsilon\}}|\nabla_A\phi|^2+Ric(\phi,\phi)+2tr(F_A\wedge *(\phi)^2).
\end{align}
Combine (3.5), (3.6) (and $Ric(\phi,\phi)=2|\phi|^2$), we obtain the following equality
\begin{align}
\int_{S^3\times \{y>\varepsilon\}}|F_A|^2+|\phi^2|^2+|\nabla_A\phi|^2+2|\phi|^2
=\frac{2}{3}\int_{S^3}\phi^3\Big\vert_{y=\varepsilon}-2\int_{S^3}\phi\wedge F_A\Big\vert_{y=\varepsilon}.
\end{align}
We use $*_3$ to denote the Hodge star on $S^3$ with respect to the standard metric. According to Proposition 3.1 and fundamental theorem of calculus, one can derive
\begin{align}
\int_{S^3\times \{y>\varepsilon\}}|\nabla_A\phi|^2+|\phi^2|^2-\frac{2}{3}\int_{S^3}\phi^3\Big\vert_{y=\varepsilon}=\int_{S^3\times \{y>\varepsilon\}}&|\bar{\nabla}_A\phi|^2+|\partial_y\phi|^2+|\phi^2|^2\nonumber\\
-2&\int_{S^3\times \{y>\varepsilon\}}\partial_y\phi\wedge \phi^2 dy\nonumber\\
=\int_{S^3\times \{y>\varepsilon\}}&|\bar{\nabla}_A\phi|^2+|*_3\partial_y \phi+\phi^2|^2.
\end{align}
Therefore we have the following formula
\begin{align}
\int_{S^3\times \{y>\varepsilon\}}|F_A|^2+|\bar{\nabla}_A\phi|^2+|*_3\partial_y \phi+\phi^2|^2+2|\phi|^2=-2\int_{S^3}\phi\wedge F_A\Big\vert_{y=\varepsilon}.
\end{align}
In this equality, $2|\phi|^2$ on the left hand side will blow up as $\varepsilon$ goes to infinity. Meanwhile, $-2\int_{S^3}\phi\wedge F_A\Big\vert_{y=\varepsilon}$ on the right hand side also goes to infinity. We have to show that the difference between them is finite. Here we need to use the following two facts: Firstly, by (3.2),
\begin{align}
\int_{S^3}\phi\wedge F_A\Big\vert_{y=\varepsilon}=\int_{S^3}\phi^H\wedge F_{A^H}\Big\vert_{y=\varepsilon}+O(\varepsilon).
\end{align}
Secondly, one can compute directly to show that $|\phi^H|^2=\frac{1}{y^2}|\omega|^2(1+O(y^2))$ and $F_{A^H}\wedge \phi^H=\frac{1}{y}|\omega|^2(1+O(y^2))d\Omega+dt\wedge \partial_yA^H\wedge \Phi^H$ where $d\Omega$ is the volume form for $S^3$. So we obtain the following result.
\begin{align}
\lim_{\varepsilon\rightarrow 0}\Bigg(\int_{S^3\times \{y>\varepsilon\}}|\phi^H|^2+\int_{S^3}\phi^H\wedge F_{A^H}\Big\vert_{y=\varepsilon}\Bigg)= C_0
\end{align}
where $C_0$ is a constant depending only on $(A^H,\phi^H)$.

 By (3.10) and (3.11), one can derive the following equality from (3.9) (by taking $\varepsilon$ goes to zero):
\begin{align}
\int_{M}|F_A|^2+|\bar{\nabla}_A\phi|^2+|*_3\partial_y \phi+\phi^2|^2-4tr(\phi^H\wedge *\rho)+2|\rho|^2=C_0
\end{align}
Note that every term on the left hand side of (3.12) is positive except the integration against $4tr(\phi^H\wedge \rho)$. Therefore, the following proposition implies Theorem 1.1.
\begin{pro}
Let $(A,\phi)=(A^H,\phi^H)+(\alpha,\rho)$ defined in (3.2). Then
\begin{align*}
\int_{M}|2tr(\phi^H\wedge *\rho)|\leq C_1+\int_M|\rho|^2+\frac{1}{2}\int_M |*_3\partial_y \phi+\phi^2|^2
\end{align*}
for some constant $C_1$ depends only on $(A^H,\phi^H)$.
\end{pro}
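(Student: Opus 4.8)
The plan is to bound the cross term $\int_M 2\,\mathrm{tr}(\phi^H\wedge *\rho)$ pointwise-in-$y$ by splitting $M=S^3\times\mathbb{R}^+$ into the region $\{y\le 1\}$ near the Nahm pole and the region $\{y\ge 1\}$ where $\phi^H$ decays exponentially. On $\{y\ge 1\}$ the estimate is easy: $|\phi^H|$ is exponentially small by the explicit formula, while $\rho=O(y)$ grows at most polynomially away from $0$ but is globally controlled by $\|\rho\|_{L^2}$ (indeed $(A,\phi)$ and $(A^H,\phi^H)$ both converge exponentially to the same flat connection, so $\rho\in L^2$); hence $\int_{\{y\ge1\}}|\phi^H||\rho|\le C\|\rho\|_{L^2}\le C + \tfrac12\|\rho\|_{L^2}^2$ after Cauchy--Schwarz and Young's inequality, absorbing into the allowed terms.

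The real work is the region $\{y\le 1\}$, where $|\phi^H|\sim 1/y$ is \emph{not} in $L^2$ and Cauchy--Schwarz against $\|\rho\|_{L^2}$ alone fails. The key observation is that $\rho=O(y)$ by Proposition 2.3, so $|\phi^H\wedge *\rho|\sim \tfrac{1}{y}\cdot O(y) = O(1)$ is bounded, but a bound by a constant is not integrable over $\mathbb{R}^+$ either. So I would instead integrate by parts in the $y$-variable to trade the singular factor $1/y$ for a derivative. Concretely, write $\phi^H=\frac{f(y)}{y}\,\omega$ with $f(y)\to$ const as $y\to0$ (from the explicit formula $f$ is smooth and $f(0)\ne 0$), and $\rho=\sum_{i\ge 1}q_i(\vec x)y^i$, so that $\phi^H\wedge *\rho = \frac{f(y)}{y}\,\omega\wedge *\rho$. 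The factor $\omega\wedge *\rho$ vanishes to order $y$ at the boundary; writing it as $\partial_y$ of something that vanishes to order $y^2$, integration by parts in $y$ moves the derivative onto $f(y)/y$, whose $y$-derivative is $O(1/y)$ times the (order $y^2$) primitive, i.e. again $O(y)$, now genuinely integrable against the bounded remaining factors, with the boundary term at $y=0$ vanishing and the one at $y=1$ controlled by the $\{y\ge1\}$ analysis. The quadratic term $|*_3\partial_y\phi+\phi^2|^2$ enters here: after the integration by parts, the term containing $\partial_y\rho$ must be recombined, using $\phi=\phi^H+\rho$ and the definition of $*_3\partial_y\phi+\phi^2$, so that $\partial_y\rho$ appears only inside the combination $*_3\partial_y\phi+\phi^2$ (whose $L^2$ norm is available) plus lower-order terms; Young's inequality then puts $\tfrac12\int_M|*_3\partial_y\phi+\phi^2|^2$ on the right and dumps everything else into $C_1$ and $\int_M|\rho|^2$.

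In more detail, the scheme of steps is: (i) record from the explicit formulas that $\phi^H=\frac{f(y)}{y}\omega$ with $f\in C^\infty([0,\infty))$, $f(0)\ne 0$, $f$ and all derivatives exponentially decaying, and that $|\omega|^2$ and the volume form on $S^3$ are fixed; (ii) split the integral at $y=1$ and dispatch $\{y\ge 1\}$ by Cauchy--Schwarz plus exponential decay of $\phi^H$; (iii) on $\{y\le1\}$, use $\rho=O(y)$ to write $\omega\wedge *\rho = y\,\sigma(\vec x,y)$ for a bounded $\sigma$, hence $\phi^H\wedge*\rho=f(y)\sigma$, already bounded, then integrate by parts once in $y$ against a primitive of $f$ to gain the extra power of $y$; (iv) in the boundary term and in the $\partial_y\rho$-term, substitute $\partial_y\rho=\partial_y\phi-\partial_y\phi^H$ and complete to $*_3\partial_y\phi+\phi^2$, bounding the leftover $\phi^2=(\phi^H+\rho)^2$ terms by $C_1+\int|\rho|^2$ using $|\phi^H\cdot\rho|\le |\phi^H|^2 + |\rho|^2$ and the integrability of $|\phi^H|^2$ near $y=1$... only — near $y=0$ one again uses $\rho=O(y)$ to beat the $1/y^2$; (v) collect constants.

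The main obstacle I expect is bookkeeping the $\{y\le 1\}$ integration by parts so that \emph{every} surviving term is either manifestly $O(1)$ with an integrable $y$-weight, or lands inside one of the two allowed quadratic expressions $\int|\rho|^2$ and $\tfrac12\int|*_3\partial_y\phi+\phi^2|^2$ — in particular making sure the non-integrable pieces $\sim 1/y$ and $\sim 1/y^2$ coming from differentiating $f(y)/y$ and from $(\phi^H)^2$ are always paired against factors vanishing to the matching order in $y$ supplied by Proposition 2.3 ($\alpha=O(y^2)$, $\rho=O(y)$). A secondary subtlety is justifying that $\rho\in L^2(M)$ globally and that there is no boundary contribution at $y=\infty$, but both follow from the exponential-decay asymptotics of He (already invoked in the excerpt for the $y=\infty$ boundary term of (3.5)).
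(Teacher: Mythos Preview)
Your treatment of the region $\{y\ge 1\}$ is fine and matches the paper. The gap is in $\{y\le 1\}$.

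The quantity you must bound is $\int 2h(y)\,|\langle\omega,\rho\rangle|$ with $h\sim 1/y$. Your plan is: integrate by parts in $y$, then in the resulting $\partial_y\rho$ term substitute $\partial_y\rho=\partial_y\phi-\partial_y\phi^H$ and ``complete to $*_3\partial_y\phi+\phi^2$''. But when you expand $*_3\partial_y\phi+\phi^2$ in terms of $\rho$, you get
\[
*_3\partial_y\phi+\phi^2=\bigl(*_3\partial_y\phi^H+(\phi^H)^2\bigr)+*_3\partial_y\rho+[\phi^H,\rho]+\rho\wedge\rho,
\]
so isolating $\partial_y\rho$ throws off a leftover $[\phi^H,\rho]$, which is again of order $\frac{1}{y}|\rho|$ --- exactly the singularity you started with. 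Your scheme is circular at this point. Your fallback, ``near $y=0$ use $\rho=O(y)$ to beat the $1/y^2$'', does kill the singularity but the constant hidden in $O(y)$ comes from the Taylor coefficients $q_i(\vec x)$ of the particular solution; the resulting $C_1$ would depend on $(A,\phi)$, which is precisely what the proposition forbids.

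What the paper does instead is algebraic, not analytic. Decompose $p^*(T^*S^3)\otimes\mathfrak{su}(2)=V^1\oplus V^2\oplus V^3$ into eigenbundles of $*_3[\omega,\cdot]$ with eigenvalues $2,1,-1$; since $\phi^H=h\omega\in\Gamma(V^1)$ one only sees $\rho^{(1)}=\alpha\omega$. On the one--dimensional $V^1$ the cross term $*_3[\phi^H,\rho^{(1)}]$ equals $2h\rho^{(1)}$, and the combination
\[
\partial_y\rho^{(1)}+*_3[\phi^H,\rho^{(1)}]+*_3(\rho^{(1)}\wedge\rho^{(1)})=\partial_y\rho^{(1)}+(2h+\alpha)\rho^{(1)}=f^{-1}\partial_y(f\rho^{(1)})
\]
is an integrating--factor identity, with $f=\exp\bigl(-2\int_y^\infty h+\int_0^y\alpha\bigr)$. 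Taking absolute values gives
\[
2h|\rho^{(1)}|=f^{-1}\partial_y(f|\rho^{(1)}|)-\partial_y|\rho^{(1)}|-\mathrm{sign}(\alpha)\,\alpha^2|\omega|.
\]
The middle term integrates to $-|\rho^{(1)}|\big|_{y=1}\le 0$ (using $\rho^{(1)}(0)=0$), the last term is $\le|\rho^{(1)}|^2$, and the first term is pointwise equal to $|*_3\partial_y\rho^{(1)}+[\phi^H,\rho^{(1)}]+\rho^{(1)}\wedge\rho^{(1)}|$, which is the $V^1$--component of $*_3\partial_y\phi+\phi^2$ minus the $(A^H,\phi^H)$ piece (controlled by $C_H$ from Proposition~2.5), up to the discrepancy $(\rho\wedge\rho)^{(1)}-\rho^{(1)}\wedge\rho^{(1)}$. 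A short computation (Lemma~3.4) bounds that discrepancy by $\frac{1}{\sqrt 6}(|\rho^{(2)}|^2+|\rho^{(3)}|^2)\le|\rho|^2$. No $O(y)$ expansion is used quantitatively, and all constants come from $(A^H,\phi^H)$ alone.

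So the missing idea is the eigenbundle decomposition of $*_3[\omega,\cdot]$ together with the integrating--factor identity on $V^1$; this is what converts the dangerous $[\phi^H,\rho]$ term into part of the allowed quadratic $|*_3\partial_y\phi+\phi^2|^2$ rather than regenerating it.
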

\begin{proof}
\ \\
{\bf Step 1}.
First of all, let us use the notations $\mathfrak{t}_i$, $\mathfrak{e}_i$ defined in Section 2. Denote by $p$ the projection $p:M\rightarrow S^3$. We define the following subbundles of $p^*(T^*S^3)\otimes \mathfrak{su}(2)$:
\begin{align}
V^1&=\mbox{real line bundle spanned by }\{\omega\}:=\mbox{span}\{\omega\},\\
V^2&=\mbox{span}\{\mathfrak{t}_i\mathfrak{e}_j-\mathfrak{t}_j\mathfrak{e}_i\}_{i\neq j},\\
V^3&=\mbox{span}\{\mathfrak{t}_1\mathfrak{e}_1-\mathfrak{t}_2\mathfrak{e}_2, \mathfrak{t}_1\mathfrak{e}_1-\mathfrak{t}_3\mathfrak{e}_3,\mathfrak{t}_i\mathfrak{e}_j+\mathfrak{t}_j\mathfrak{e}_i\}_{i\neq j}.
\end{align}
It is easy to check that $V^i\perp V^j$ for any $i\neq j$. Then we have
\begin{align}
\tilde{\Omega}^1(M;\mathfrak{su}(2)):&=\Gamma(p^*(T^*S^3)\otimes \mathfrak{su}(2))\\
&=\Gamma(V^1)\oplus\Gamma(V^2)\oplus \Gamma(V^3).\nonumber
\end{align}
For any $u\in \tilde{\Omega}^1(M;\mathfrak{su}(2))$, we use $u^{(i)}$ to denote the component of $u$ in $\Gamma(V^i)$, so $u=u^{(1)}+u^{(2)}+u^{(3)}$. Recall that $\rho\in \tilde{\Omega}^1(M;\mathfrak{su}(2))$ by Proposition 3.1. So we can write $\rho=\rho^{(1)}+\rho^{(2)}+\rho^{(3)}$ . We should also notice that, for any $v, w\in \tilde{\Omega}^1(M;\mathfrak{su}(2))$, $*_3(v\wedge v), *_3[v,w]\in \tilde{\Omega}^1(M;\mathfrak{su}(2))$. The following property was showed in [6].
\begin{align}
&*_3[\omega, v^{(i)}]=\lambda_i v^{(i)}\in V^i\mbox{ for any }v^{(i)}\in \Gamma(V^i),\\
&\mbox{ } \mbox{ where }(\lambda_1,\lambda_2,\lambda_3)=(2,1,-1).\nonumber
\end{align}
By (3.17) and some computation, we have
\begin{lemma}
\begin{align}
&|(*_3(v\wedge v))^{(1)}-*_3v^{(1)}\wedge v^{(1)}|\leq \frac{1}{\sqrt{6}}\Big(|v^{(2)}|^2+|v^{(3)}|^2\Big).
\end{align}
\end{lemma}
We leave the proof of this lemma in Appendix (Readers can also try to prove it by themselves).\\
\ \\
{\bf Step 2}. Since we have $|\phi^H|\leq C_2 e^{-2y}$ for some $C_2>0$ when $y>1$, so by Cauchy-Schwarz inequality (also remember that $-tr(u\wedge v)$ is the inner product $\langle u,v\rangle$),
\begin{align}
\int_{S^3\times\{y>1\}}|2tr(\phi^H\wedge *\rho)|&\leq \int_{S^3\times\{y>1\}}C_2e^{-2y}|\rho^{(1)}|\nonumber\\
&\leq C_2^2+ \frac{1}{2}\int_{S^3\times\{y>1\}}|\rho^{(1)}|^2.
\end{align}

\ \\
{\bf Step 3}. Here we derive some equality and notations for later use. Let us write $\phi^H=h\omega$ and $\rho^{(1)}=\alpha \omega$ for real-valued functions $h,\alpha:M\rightarrow \mathbb{R}$. Notice that $h$ depends only on the variable $y$. Then we have the following equality
\begin{align}
\partial_y \rho^{(1)}+*_3[\phi^H,\rho^{(1)}]+*_3(\rho^{(1)}\wedge\rho^{(1)})&=\partial_y \rho^{(1)}+2h\rho^{(1)}+\alpha \rho^{(1)}\\
&=f^{-1}\partial_y(f\rho^{(1)})\nonumber
\end{align}
where 
\begin{align*}
f(p,y):=e^{-2\int_y^{\infty}h(s)ds+\int_0^y\alpha(p,s)ds}
\end{align*}
for $(p,y)\in S^3\times \mathbb{R}^+=M$. Recall that, for any real-valued function $W:\mathbb{R}^+\rightarrow \mathbb{R}$, we have $\frac{d}{dy}|W|=sign(W)\frac{d}{dy} W$ (in the sense of weak derivatives, readers can see the proof of Lemma 7.6 in [2] for details). Hence $f>0$ implies that $f^{-1}\partial_y(f|\rho^{(1)}|)=sign(\alpha)f^{-1}\partial_y(f\alpha)|\omega|$. So we can conclude the following equality by using (3.20).
\begin{align}
f^{-1}\partial_y(f|\rho^{(1)}|)=\partial_y|\rho^{(1)}|+2h|\rho^{(1)}|+sign(\alpha)|\alpha|^2|\omega|
\end{align}
almost everywhere.\\

By (3.21) and integration by parts , we have
\begin{align}
&\mbox{ }\mbox{ }\mbox{ }\int_{S^3\times(0,1]}|2tr(\phi^H\wedge *\rho)|\nonumber\\
&\leq \int_{S^3\times(0,1]}2h|\rho^{(1)}||\omega|\nonumber\\
&\leq \int_{S^3\times(0,1]}f^{-1}\partial_y(f|\rho^{(1)}|)|\omega|+\int_{S^3\times(0,1]}|\rho^{(1)}|^2-\int_{S^3\times\{1\}}|\rho^{(1)}||\omega|\nonumber\\
&\leq \int_{S^3\times(0,1]} f^{-1}\partial_y(f|\rho^{(1)}|)|\omega|+\int_{S^3\times(0,1]}|\rho^{(1)}|^2\\
&\leq \int_{S^3\times(0,1]}|*_3\partial_y\rho^{(1)}+[\phi^H,\rho^1]+\rho^{(1)}\wedge\rho^{(1)}||\omega+\int_{S^3\times(0,1]}|\rho^{(1)}|^2\nonumber
\end{align}
where $|\omega|=\sqrt{\frac{3}{2}}$. By (3.18) and triangle inequality again, the integrand of the second last term in (3.22) can be bounded as the following:
\begin{align}
&|*_3\partial_y\rho^{(1)}+[\phi^H,\rho^{(1)}]+\rho^{(1)}\wedge\rho^{(1)}||\omega|\\
\leq& |*_3\partial_y\rho^{(1)}+[\phi^H,\rho^{(1)}]+(\rho\wedge\rho)^{(1)}||\omega|+\frac{1}{2}\Big(|\rho^{(2)}|^2+|\rho^{(3)}|^2\Big).\nonumber
\end{align}

Notice that $(\partial_y\phi+*_3\phi^2)^{(1)}=*_3\phi^H+(\phi^H)^2+*_3\partial_y\rho^{(1)}+[\phi^H,\rho^{(1)}]+(\rho\wedge\rho)^{(1)}$. So one can easily obtain from Proposition 2.5 that 
\begin{align*}
\|*_3\partial_y\rho^{(1)}+[\phi^H,\rho^{(1)}]+(\rho\wedge\rho)^{(1)}\|_{L^1(S^3\times(0,1])}\leq \sqrt{4\pi}C_H+\|*_3\partial_y\phi+\phi^2\|_{L^1(S^3\times(0,1])}.
\end{align*}
 So by Cauchy-Schwarz inequality, (3.23) implies
\begin{align}
&\int_{S^3\times(0,1]}|*_3\partial_y\rho^{(1)}+[\phi^H,\rho^{(1)}]+\rho^{(1)}\wedge\rho^{(1)}||\omega|\\
\leq&\sqrt{4\pi}C_H+ \int_{S^3\times(0,1]}|*_3\partial_y\phi+\phi^2||\omega|+\frac{1}{2}\int_{S^3\times(0,1]}|\rho^{(2)}|^2+|\rho^{(3)}|^2\nonumber\\
\leq&\sqrt{4\pi}C_H+3\pi+ \frac{1}{2}\int_{S^3\times(0,1]}|*_3\partial_y\phi+\phi^2|^2+\frac{1}{2}\int_{S^3\times(0,1]}|\rho^{(2)}|^2+|\rho^{(3)}|^2.\nonumber
\end{align}
So by (3.19), (3.22) and (3.24), we prove the Proposition 3.3. Therefore Theorem 1.1 is proved now. The constant $C_1$ in Proposition 3.3 will be determined by $C_2$ and $C_H$. The constant $C$ in Theorem 1.1 will be determined by $C_0$ and $C_1$.
\end{proof}
\begin{remark}
In addition, we can actually prove
\begin{align}
\int_{M}|F_A|^2+|\bar{\nabla}_A\phi|^2+\frac{1}{2}|*_3\partial_y \phi+\phi^2|^2\leq C
\end{align}
by putting different coefficient weight in (3.24) and changing the coefficient of \\
$\int_M|*_3\partial_y \phi+\phi^2|^2$ in Proposition 3.3 by $\frac{1}{4}$.
\end{remark}
\begin{remark}
In the case that $K\neq \emptyset$, one can observe from the computation of the modal solution $(A^L,\phi^L)$ that $F_A$ is not $L^2$ bounded. So the argument we use in this paper cannot apply to the general cases. However, $|F^+_A|$ and $|F^-_A|$ blow up at the same rate as $y\rightarrow 0$. Therefore, one should seek for a formula which directly proves that $|p(A)|$ has a uniform bound for all $(A,\phi)\in \mathfrak{M}_K$.\\
\end{remark}

\section{Appendix: Proof of Lemma 3.4}
To prove (3.18), we write $v=v^{(1)}+v^{(2)}+v^{(3)}$ where $v^{(1)}=\alpha_1\omega$ for some $\alpha_1\in \mathbb{R}$. By (3.17), we will have
\begin{align*}
(*_3(v\wedge v))^{(1)}=(*_3(v^{(1)}\wedge v^{(1)}&+v^{(2)}\wedge v^{(2)}+v^{(3)}\wedge v^{(3)}\\
&+[v^{(1)},v^{(2)}]+[v^{(1)},v^{(3)}]+[v^{(2)},v^{(3)}]))^{(1)}\\
=(*_3(v^{(1)}\wedge v^{(1)}&+v^{(2)}\wedge v^{(2)}+v^{(3)}\wedge v^{(3)}+2[v^{(2)},v^{(3)}]))^{(1)}
\end{align*}
One can easily check by the definitions (3.14) and (3.15), $[v^{(2)},v^{(3)}]\perp V^1$. So
\begin{align*}
(*_3(v\wedge v))^{(1)}-*_3v^{(1)}\wedge v^{(1)}=(v^{(2)}\wedge v^{(2)}+v^{(3)}\wedge v^{(3)})^{(1)}.
\end{align*}
By this equality, to obtain (3.18), we have to prove that for any $v^{(2)}\in \Gamma(V^2)$, $v^{(3)}\in \Gamma(V^3)$,
\begin{align}
|(v^{(2)}\wedge v^{(2)})^{(1)}|= \frac{1}{\sqrt{6}}|v^{(2)}|^2;\\
|(v^{(3)}\wedge v^{(3)})^{(1)}|= \frac{1}{\sqrt{6}}|v^{(3)}|^2.
\end{align}
Let $\mu_1=\mathfrak{t}_2\mathfrak{e}_3-\mathfrak{t}_3\mathfrak{e}_2$, $\mu_2=\mathfrak{t}_3\mathfrak{e}_1-\mathfrak{t}_1\mathfrak{e}_3$ and $\mu_3=\mathfrak{t}_1\mathfrak{e}_2-\mathfrak{t}_2\mathfrak{e}_1$. Then we have
\begin{align*}
*_3(\mu_1\wedge \mu_1)=\mathfrak{t}_1\mathfrak{e}_1,
*_3(\mu_2\wedge \mu_2)&=\mathfrak{t}_2\mathfrak{e}_2,
*_3(\mu_3\wedge \mu_3)=\mathfrak{t}_3\mathfrak{e}_3,\\
*_3[\mu_i,\mu_j]&=\mathfrak{t}_i\mathfrak{e}_j+\mathfrak{t}_i\mathfrak{e}_j, i\neq j.
\end{align*}
Therefore $*_3[\mu_i,\mu_j]\perp V^1$ for all $i\neq j$. Also notice that $\mathfrak{t}_1\mathfrak{e}_1=\frac{1}{3}(\omega+ (\mathfrak{t}_1\mathfrak{e}_1-\mathfrak{t}_2\mathfrak{e}_2)+(\mathfrak{t}_1\mathfrak{e}_1-\mathfrak{t}_3\mathfrak{e}_3))$. So $(*_3(\mu_1\wedge \mu_1))^{(1)}=\frac{1}{3}\omega$. Similarly, $(*_3(\mu_2\wedge \mu_2))^{(1)}=\frac{1}{3}\omega$ and $(*_3(\mu_3\wedge \mu_3))^{(1)}=\frac{1}{3}\omega$.\\

Combine these results and $|\mu_i|=1$, $|\omega|=\sqrt{\frac{3}{2}}$, for any $v^{(2)}=\alpha_1\mu_1+\alpha_2\mu_2+\alpha_3\mu_3$, we have
\begin{align*}
|(v^{(2)}\wedge v^{(2)})^{(1)}|=|\frac{1}{3}\omega|\sum_{i=1}^3 |\alpha_i|^2=\frac{1}{\sqrt{6}}\sum_{i=1}^3 |\alpha_i|^2=\frac{1}{\sqrt{6}}|v^{(2)}|^2.
\end{align*}
So we prove (4.1).\\

Let $\nu_1=\mathfrak{t}_2\mathfrak{e}_3+\mathfrak{t}_3\mathfrak{e}_2$, $\nu_2=\mathfrak{t}_3\mathfrak{e}_1+\mathfrak{t}_1\mathfrak{e}_3$, $\nu_3=\mathfrak{t}_1\mathfrak{e}_2+\mathfrak{t}_2\mathfrak{e}_1$, $\nu_{1,2}=\mathfrak{t}_1\mathfrak{e}_1-\mathfrak{t}_2\mathfrak{e}_2$ and $\nu_{1,3}=\mathfrak{t}_1\mathfrak{e}_1-\mathfrak{t}_3\mathfrak{e}_3$. Then we will have
\begin{align*}
*_3(\nu_1\wedge \nu_1)=\mathfrak{t}_1\mathfrak{e}_1,
*_3(\nu_2\wedge \nu_2)&=\mathfrak{t}_2\mathfrak{e}_2,
*_3(\nu_3\wedge \nu_3)=\mathfrak{t}_3\mathfrak{e}_3,\\
*_3(\nu_{1,2}\wedge \nu_{1,2})=\mathfrak{t}_3\mathfrak{e}_3,&\mbox{ }*_3(\nu_{1,3}\wedge \nu_{1,3})=\mathfrak{t}_2\mathfrak{e}_2,\\
*_3[\mu_a,\mu_b]&\perp V^1, a\neq b.
\end{align*}
So for any $v^{(3)}=\beta_1\nu_1+\beta_2\nu_2+\beta_3\nu_3+\beta_{1,2}\nu_{1,2}+\beta_{1,3}\nu_{1,3}$, we have
\begin{align*}
|(v^{(3)}\wedge v^{(3)})^{(1)}|&=|\frac{1}{3}\omega|(\sum_{i=1}^3 |\beta_i|^2+|\beta_{1,2}|^2+|\beta_{1,3}|^2)\\
&=\frac{1}{\sqrt{6}}(\sum_{i=1}^3 |\beta_i|^2+|\beta_{1,2}|^2+|\beta_{1,3}|^2)=\frac{1}{\sqrt{6}}|v^{(3)}|^2.
\end{align*}
Therefore, we obtain (4.2).\\

\noindent {\bf Acknowledgement:} The second author would like to thank Siqi He for several comments and explanations of his works. The work of N. Leung described in this paper was substantially supported by grants from the Research Grants Council of the Hong Kong Special Administrative Region, China (Project No. CUHK14303516) and a CUHK direct grant (Project No. 4053215). \\ 

\bibliographystyle{amsplain}

\end{document}